\documentclass[12pt]{amsart}
\usepackage{amssymb}
\usepackage{enumerate}

\usepackage{graphicx}
\usepackage[usenames,dvipsnames,svgnames,table]{xcolor}
\usepackage[a4paper]{geometry}
\usepackage{psfrag,xmpmulti,amscd,color,pstricks, import}

\textwidth144mm
\textheight240mm
\oddsidemargin7.5mm

\evensidemargin7.5mm
\topmargin-6mm
\parskip4pt plus2pt minus2pt
\parindent0mm


\newcommand{\eps}{\varepsilon}

\newcommand{\K}{{\mathcal K}}
\newcommand{\N}{{\mathbb N}}
\newcommand{\C}{{\mathbb C}}

\newcommand{\R}{{\mathbb R}}

\newcommand{\tef}{transcendental entire function}

\newcommand\qfor{\;\;\text{for }}

\newcommand{\mt}{\widetilde{m}}


\theoremstyle{plain}
\newtheorem{theorem}{Theorem}[section]

\newtheorem*{theorem*}{Theorem}
\newtheorem*{proposition*}{Proposition}


\newtheorem{lemma}[theorem]{Lemma}
\theoremstyle{definition}
\newtheorem{definition}[theorem]{Definition}
\theoremstyle{remark}
\newtheorem*{remark*}{Remark}
\newtheorem*{remarks*}{Remarks}

\theoremstyle{problem}

\theoremstyle{example}
\newtheorem{example}[theorem]{Example}
\newtheorem*{example*}{Example}
\theoremstyle{question}

\theoremstyle{questions}
\newtheorem*{questions*}{Questions}

{\begin{list}{}%
         {\setlength{\leftmargin}{#1}}%
         \item[]%
}
{\end{list}}

\begin{document}


\title[Iterating the minimum modulus]{Iterating the minimum modulus: functions of order half, minimal type}

\author{D. A. Nicks}
\address{School of Mathematical Sciences\\
The University of Nottingham\\
University Park\\
Nottingham NG7 2RD\\
UK}
\email{Dan.Nicks@nottingham.ac.uk}

\author{P. J. Rippon}
\address{School of Mathematics and Statistics \\
The Open University \\
   Walton Hall\\
   Milton Keynes MK7 6AA\\
   UK}
\email{Phil.Rippon@open.ac.uk}

\author{G. M. Stallard}
\address{School of Mathematics and Statistics \\
The Open University \\
   Walton Hall\\
   Milton Keynes MK7 6AA\\
   UK}
\email{Gwyneth.Stallard@open.ac.uk}

\thanks{2010 {\it Mathematics Subject Classification.}\; Primary 37F10, Secondary 30D05.\\The last two authors were supported by the EPSRC grant EP/R010560/1.}



\begin{abstract}
For a {\tef}~$f$, the property that there exists $r>0$ such that $m^n(r)\to\infty$ as $n\to\infty$, where $m(r)=\min \{|f(z)|:|z|=r\}$, is related to conjectures of Eremenko and of Baker, for both of which order $1/2$ minimal type is a significant rate of growth. We show that this property holds for functions of order~$1/2$ minimal type if the maximum modulus of~$f$ has sufficiently regular growth and we give examples to show the sharpness of our results by using a recent generalisation of Kjellberg's method of constructing entire functions of small growth, which allows rather precise control of $m(r)$.
\end{abstract}
\maketitle
\begin{center}
{\it  Dedicated to the memory of Walter Hayman.}\\
\end{center}

\section{Introduction}\label{intro}
\setcounter{equation}{0}
Let $f$ be a {\tef} and denote by $f^n$, $n\in\N$, the $n$-th iterate of~$f$. The results in this paper address a question in complex dynamics concerning the {\it escaping set} of a {\tef}~$f$, defined as
\[
I(f)=\{z:f^n(z) \to \infty \;\text{ as } n\to \infty\},
\]
namely, whether all the components of $I(f)$ are unbounded. This question is known as {\it Eremenko's conjecture} \cite{E} and despite much work it remains open.

For any {\tef}~$f$, the {\it maximum modulus} and {\it minimum modulus} of~$f$ are defined as follows, for $r\ge 0$:
\[
M(r)=M(r,f)=\max_{|z|=r}|f(z)|\quad\text{and}\quad m(r)=m(r,f)=\min_{|z|=r}|f(z)|,
\]
respectively. There is a huge literature about the relationship between $m(r)$ and $M(r)$ for various types of {\tef}s. Clearly $m(r)<M(r)$ for all $r>0$ and the function $M(r)$ is strictly increasing and unbounded. On the other hand, the function $m(r)$ is  alternately increasing and decreasing between adjacent values of~$r$ for which $m(r)=0$ and eventually decreases to~0 in the case that~$f$ has only finitely many zeros. For certain functions, however, $m(r)$ is comparable in size to $M(r)$ for an unbounded set of values of~$r$.

We let $M^n(r)$ and $m^n(r)$ be defined by iterating the real functions $M(r)$ and $m(r)$ respectively. For any {\tef}~$f$ we have
\begin{equation}\label{Mnr}
M^n(r) \to \infty\;\text{ as }n\to\infty,
\end{equation}
for $r\ge R=R(f)$ say, but for the iterated minimum modulus the property:
\begin{equation}\label{minmodprop}
\text{there exists } r > 0 \text{ such that } m^n(r) \to \infty \text{ as } n \to \infty,
\end{equation}
may or may not hold, depending on the function~$f$.

It has been known for some time that the sequence $M^n(r)$ is of importance in relation to work on Eremenko's conjecture, since it plays a key role in the definition of a subset of $I(f)$ called the {\it fast escaping set}, all of whose components are unbounded; see, for example, \cite{RS10a}. More recently, it has been observed that property~\eqref{minmodprop}, when it is true, can also play an important role in relation to this conjecture; see \cite{ORS1}, \cite{ORS2} and~\cite{NRS19}. For example, in \cite{NRS19} we obtained the following result, which gives a family of {\tef}s for which Eremenko's conjecture holds in a particularly strong way.
\begin{theorem}\label{Ifconn}
Let~$f$ be a real {\tef} of finite order with only real zeros for which there exists $r>0$ such that $m^n(r) \to \infty$ as $n \to \infty.$ Then $I(f)$ is connected and hence Eremenko's conjecture holds for~$f$.
\end{theorem}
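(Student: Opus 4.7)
The plan is to show that $I(f)$ contains a \emph{spider's web}---that is, a sequence of bounded simply connected domains $(G_n)$ with $G_n\subset G_{n+1}$, $\bigcup_n G_n=\C$, and $\partial G_n\subset I(f)$. Any set containing a spider's web is automatically connected, giving the conclusion. The natural candidate is $G_n:=\{z:|z|<r_n\}$, where $r_n:=m^n(r)\to\infty$ by hypothesis, so that the boundary circles $C_n=\partial G_n$ satisfy $|f|\ge r_{n+1}$ by the very definition of~$m$. The whole task thus reduces to showing that $C_n\subset I(f)$ for all sufficiently large $n$.

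The obstacle is that the pointwise bound $|f(z)|\ge r_{n+1}$ on $C_n$ does not by itself control $|f^k(z)|$ for $k\ge 2$, because $m$ is highly non-monotonic---indeed $m$ vanishes at every real zero of~$f$, so $|f(z)|$ could easily land at a radius where $m$ has a deep dip. Here the three structural hypotheses (real, only real zeros, finite order) are essential. The Hadamard factorisation forces $\log|f(z)|$ to be a sum over real zeros plus the real part of a polynomial, and for such $f$ classical estimates---a $\cos\pi\rho$-type minimum modulus theorem, together with the Laguerre--P\'olya-style monotonicity $|f(x+iy)|\ge|f(x)|$ in $|y|$ that holds when the order is small---should yield a lower bound of the form $|f(z)|\ge c\,m(|z|)$ on a whole annulus enclosing each $C_n$, not merely on $C_n$ itself.

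With such an annular estimate in hand, propagating the minimum modulus through the dynamics becomes an induction: $f$ sends a suitable neighbourhood $A_n$ of $C_n$ into a neighbourhood $A_{n+1}$ of $C_{n+1}$ on which $|f|$ is at least a definite fraction of $r_{n+2}$, so that for $z\in C_n$ one obtains $|f^k(z)|\ge c^{k}r_{n+k+1}$ by iteration. Since $r_{n+k}$ grows much faster than $c^{-k}$, we deduce $f^k(z)\to\infty$, giving $C_n\subset I(f)$ and producing the required spider's web.

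The main obstacle will be establishing the annular lower bound uniformly in~$n$. The hypothesis gives us $m$ only on the discrete sequence $(r_n)$, whereas to drive the induction we need $|f|\ge c\,m(|z|)$ on a full annulus, which amounts to ensuring that the ``bad'' radii (those at which $m$ dips sharply near real zeros of $f$) do not accumulate at the sequence $(r_n)$. This is precisely where the real-zeros hypothesis (which confines the bad radii to neighbourhoods of the real axis) and the iterated minimum modulus (which supplies concrete good radii) must interact, with the finite-order hypothesis keeping the resulting quantitative estimates tractable via Hadamard's theorem.
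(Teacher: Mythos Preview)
This theorem is not proved in the present paper at all: it is quoted from \cite{NRS19} as background motivation (see the sentence ``in \cite{NRS19} we obtained the following result'' immediately preceding the statement, and the subsequent Remark). There is therefore no proof here against which to compare your proposal. The Remark does confirm that the argument in \cite{NRS19} establishes that $I(f)$ is a spider's web, so your overall strategy is aimed in the right direction.

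That said, your sketch has a genuine gap that you yourself flag but do not close. Taking $G_n=\{|z|<r_n\}$ with $r_n=m^n(r)$ does not directly yield $\partial G_n\subset I(f)$: the bound $|f(z)|\ge r_{n+1}$ on $C_n$ tells you nothing about $|f^2(z)|$, because $|f(z)|$ need not equal $r_{n+1}$ and $m$ is not monotone. Your proposed fix, an ``annular lower bound'' of the form $|f(z)|\ge c\,m(|z|)$ holding on whole annuli, is not available in the form you suggest: for a real entire function with real zeros the minimum modulus is attained on the real axis and vanishes at every zero, so no such inequality can hold on any annulus meeting $\R$. The actual mechanism in \cite{NRS19} is different: one does not try to keep iterates on a fixed sequence of circles, but rather shows (using the real-zeros and finite-order hypotheses via explicit estimates on the Hadamard product) that images of suitable curves around the origin themselves wind around the origin at larger radius, building the spider's web from loops that are \emph{not} round circles. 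Your plan, as written, would need a substantially different inductive step to be made rigorous.
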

{\em Remark}\quad In \cite[Theorem~1.1]{NRS19} we showed moreover that, for such functions $f$, the set $I(f)$ has the structure of an (infinite) spider's web.

Recall that the {\it order} $\rho(f)$ and the {\it lower order} $\lambda(f)$ of a {\tef}~$f$ are
\[
\rho(f)=\limsup_{r\to\infty}\frac{\log\log M(r)}{\log r}\quad\text{and}\quad \lambda(f)=\liminf_{r\to\infty}\frac{\log\log M(r)}{\log r},
\]
respectively, and~$f$ is said to be {\it real} if $f(\bar z)=\overline{f(z)},\qfor z\in\C.$

In view of Theorem~\ref{Ifconn}, it is natural to ask which {\tef}s satisfy property \eqref{minmodprop}; in particular, which amongst those that satisfy the hypotheses of this theorem. In \cite{NRS19} we proved several results in relation to this question, including the following.
\begin{itemize}
\item[(a)]
For a real {\tef}~$f$ of finite order with only real zeros:
\begin{itemize}
\item[(i)]
if $0\le \rho(f)<1/2$, then property \eqref{minmodprop} holds;
\item[(ii)]
if $\rho(f)>2$, then property \eqref{minmodprop} does not hold.
\end{itemize}
\item[(b)]
For any $\rho\in [1/2,2]$ there are examples of real {\tef}s with only real zeros of order~$\rho$ for which property \eqref{minmodprop} holds and also examples of such functions for which \eqref{minmodprop} does not hold.
\end{itemize}

Actually, property \eqref{minmodprop} holds for {\em all} entire functions of order less than~$1/2$; see \cite[Theorem~1.1]{ORS2}. This follows from the cos $\pi \rho$ theorem which shows that for functions of order less than~$1/2$ there is a close relation between the minimum modulus and maximum modulus for many values of $r$; see \cite[Theorem~6.13]{wH89}.

For functions of order $1/2$, minimal type, weaker results on the size of the minimum modulus are known; in particular, Wiman showed that, for such functions, $m(r)$ is unbounded on $(0,\infty)$; see \cite[Theorem~6.4]{wH89}. In view of Wiman's result, one might hope that property \eqref{minmodprop} holds for real entire functions with only real zeros of order 1/2, minimal type, that is, when
\begin{equation}
\limsup_{r\to\infty}\frac{\log\log M(r)}{\log r}=\frac12\quad\text{and}\quad \lim_{r\to\infty} \frac{\log M(r)}{r^{1/2}}=0.
\end{equation}

In this paper, we show that \eqref{minmodprop} does indeed hold for many families of such functions, but does not hold for all.
Our main positive result is the following which, roughly speaking, shows that \eqref{minmodprop} holds whenever $(\log M(r))/r^{1/2}$ tends to 0 (as $r\to \infty)$ in a sufficiently regular manner.

\begin{theorem}\label{main1}
Let~$f$ be a {\tef} of order at most 1/2, minimal type, and suppose that there exists $r_0>0$ such that, for $r>r_0$,
\begin{equation}\label{main-crit}
\frac{\log M(r)}{r^{1/2}}\le \frac{1}{4}\frac{\log M(s)}{s^{1/2}},
\end{equation}
for some $s \in (0,r)$ which satisfies $M(s)\ge r^2$. Then there exists $r>0$ such that $m^n(r) \to \infty$ as $n \to \infty.$
\end{theorem}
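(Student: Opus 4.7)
Let $\alpha(r) = \log M(r)/r^{1/2}$. The minimal-type hypothesis gives $\alpha(r) \to 0$, while the regularity condition \eqref{main-crit} asserts that for every $r>r_0$ the function $\alpha$ has a controlled ``dip'' at some $s<r$: namely $\alpha(r) \le \tfrac14\alpha(s)$ together with $M(s)\ge r^2$. The plan is to turn such dips into large values of the minimum modulus at intermediate radii, and then to use continuity to chain these together into actual iteration.

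\emph{Step 1 (key lemma).} I would first establish a Wiman--Beurling type estimate: whenever \eqref{main-crit} holds at some large $r$, there exists $t\in[s,r]$ with
\[
\log m(t) \ge \tfrac12 \log M(s),
\]
so in particular $m(t) \ge r$ thanks to $M(s) \ge r^2$. The natural route is by contradiction through a Phragm\'en--Lindel\"of / Beurling--Ahlfors harmonic-measure estimate in the annulus $s<|z|<r$: if $|f(z)|<M(s)^{1/2}$ on every circle of radius $t$ in this annulus, then a component of the set $\{|f|<M(s)^{1/2}\}$ either separates the two boundary circles---in which case a standard distortion estimate for $\log|f|$ produces a bound of the form $\log M(r) \ge c\,(r/s)^{1/2} \log M(s)$ for an explicit $c$---or it does not, in which case the harmonic measure of the ``large'' arc on $|z|=s$ bounds $\log m$ from below on some intermediate circle. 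After optimisation, $c$ should exceed $1/4$, contradicting \eqref{main-crit}. The factor $\tfrac14$ in the hypothesis is presumably matched to exactly this threshold.

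\emph{Step 2 (chaining).} Iterating Step~1 along an increasing sequence $R_1<R_2<\ldots\to\infty$---chosen large enough that the associated $s(R_k)$ also diverges, which is forced by $M(s)\ge R_k^2$ combined with minimal type---produces radii $t_1<t_2<\ldots\to\infty$ with $m(t_k) \ge R_k > t_k$. To upgrade such a chain into $m^n(r)\to\infty$ for a specific $r$, I would use the continuity of $m$ together with the existence of nearby radii where $m$ is small (via zeros of $f$ in the infinite-zero case, or via sequences where $m(r)\to 0$ otherwise). By the intermediate value theorem this yields closed intervals $I_k$ with $\inf I_k\to\infty$ and $m(I_k) \supseteq I_{k+1}$. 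A standard compactness argument---pick $r_n\in I_1$ with $m^j(r_n) \in I_{j+1}$ for $j\le n$, extract a convergent subsequence, and use continuity of each $m^j$---then furnishes a point $r$ with $m^j(r)\in I_{j+1}$ for every $j$, so that $m^n(r)\to\infty$.

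The main obstacle is Step~1: the precise constant coming from the harmonic-measure estimate must be matched against the factor $\tfrac14$ in \eqref{main-crit}, and the auxiliary condition $M(s)\ge r^2$ must be threaded through carefully so that ``$m(t)$ is large relative to $M(s)$'' translates into ``$m(t) \ge r$''---this is exactly what forces the chain in Step~2 to grow rather than stagnate. Step~2 is comparatively routine once the key lemma is in hand, and it is essentially the same mechanism used in \cite{NRS19} to deduce property~\eqref{minmodprop}.
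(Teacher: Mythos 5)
Your overall strategy—a Beurling-type minimum-modulus estimate in an annulus, followed by an argument converting ``dips'' of $\alpha(r)=\log M(r)/r^{1/2}$ into unbounded values of $\widetilde m(r)=\max_{s\le r}m(s)$—is the same as the paper's, and the factor $\tfrac14$ does arise exactly as you suspect. But Step~1 is left as a sketch, and there is no need for the optimisation you envisage: the precise tool is Beurling's lemma from his 1933 thesis (Lemma~\ref{Beur} in the paper), which states that if $u$ is subharmonic in $\C$ and $\inf_{|z|=t}u(z)\le 0$ for every $t\in[s,r]$, then
\[
B(r,u)\ \ge\ \tfrac12\left(\tfrac{r}{s}\right)^{1/2}B(s,u).
\]
Applying this to $u(z)=\log\bigl(|f(z)|/r\bigr)$ under the assumption $m(t)\le r$ for $0<t\le r$ immediately gives $\log M(r)>\tfrac12(r/s)^{1/2}\log(M(s)/r)$, and the auxiliary hypothesis $M(s)\ge r^2$ converts $\log(M(s)/r)$ into $\ge\tfrac12\log M(s)$; the product of the two $\tfrac12$'s is precisely the $\tfrac14$ in \eqref{main-crit}. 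No case analysis on whether a level set of $|f|$ separates the boundary circles is needed, and there is no free constant $c$ to optimise—the estimate is already sharp in the form needed.

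Your Step~2 is also more elaborate than required. The paper does not re-derive the chaining argument; it invokes the established equivalence from \cite{ORS2} that property~\eqref{minmodprop} holds if and only if $\widetilde m(r)>r$ for all $r$ large enough. With that in hand, the proof is a short contradiction: if \eqref{minmodprop} fails, there are arbitrarily large $r$ with $m(t)\le r$ for all $t\le r$, and Beurling's lemma at such an $r$ contradicts \eqref{main-crit}. Your compactness-and-nested-intervals construction can be made to work, but it is reinventing the content of that cited equivalence, and as written it glosses over a real issue (you would need $f$ to have infinitely many zeros to guarantee small values of $m$ at arbitrarily large radii; this does follow from the hypotheses, since an entire function of order $<1$ with finitely many zeros is a polynomial, but it needs saying). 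In short: right idea, but the key lemma should be Beurling's inequality invoked explicitly, and the iteration step should be delegated to the known $\widetilde m$-criterion rather than rebuilt from scratch.
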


{\it Remarks }\;1. The second hypothesis of Theorem~\ref{main1} implies the first hypothesis, that~$f$ has order at most 1/2, minimal type, though this is not quite obvious.

2. Theorem~\ref{main1} implies the result stated earlier that, if $\rho(f)<1/2$, then property~\eqref{minmodprop} holds. Indeed, if $\rho(f)<1/2$ and we put
\[
\delta:=1/2-\rho(f)\in (0,1/2],
\]
then \eqref{main-crit} holds with $s=r^{\delta}$ for~$r$ sufficiently large, since this choice of~$s$ gives $r^{1/2}/s^{1/2}= r^{\rho(f)+\delta/2}$ and $M(r^{\delta}) \ge r^2$ for~$r$ sufficiently large.

In Section 2, we give a number of conditions on the maximum modulus and on the zeros of~$f$ which imply the hypotheses of Theorem~\ref{main1} and are convenient for applications.

In the second half of the paper we use a recent generalisation of a method of Kjellberg \cite{RS20} to construct examples of functions of order $1/2$, minimal type, for which property \eqref{minmodprop} does not hold. First, we show that no matter how {\em slowly} $\log M(r)$ grows, consistent with~$f$ having order 1/2, minimal type, we cannot deduce that property \eqref{minmodprop} holds.

\begin{example}\label{main5}
Let~$\delta :(0,\infty) \to (0,\tfrac12)$ be a decreasing function such that
\[
\delta(r)\to 0\;\text{ as } r \to \infty.
\]
Then there exists a real {\tef}~$f$ of order $1/2$,  minimal type, with only real zeros, and $r_0>0$ such that
\begin{equation}\label{max}
\log M(r,f)\le r^{1/2-\delta(r)},\qfor r \ge  r_0,
\end{equation}
for which property~\eqref{minmodprop} does not hold.
\end{example}

Next we show that making the additional assumption of positive lower order, or even of lower order $1/2$, is insufficient to ensure that property \eqref{minmodprop} holds.

\begin{example}\label{main4}
There exists a real {\tef} $f$ of order $1/2$, minimal type, and of lower order $1/2$, with only real zeros such that property \eqref{minmodprop} does not hold.
\end{example}

Finally, we remark that consideration of the iterates of the minimum modulus first arose in connection with the conjecture of Baker that entire functions of order $1/2$, minimal type, have no unbounded Fatou components; see \cite{NRS18} for the most recent progress on this conjecture.

The structure of the paper is as follows. In Section~\ref{orderhalf-positive} we prove our positive results, including Theorem~\ref{main1}, and in Section~\ref{background} we recall some results from \cite{RS20} needed for the proofs of Examples~\ref{main5} and \ref{main4}, which are given in Section~\ref{orderhalf-negative}.

\section{Proof of Theorem~\ref{main1} and some special cases}
\label{orderhalf-positive}
\setcounter{equation}{0}
In this section we prove Theorem~\ref{main1}, our positive result about property~\eqref{minmodprop}, and give some special cases of it which we use to obtain examples of functions that satisfy the hypotheses of Theorem~\ref{main1}.

The proof of Theorem~\ref{main1} depends on the following lemma of Beurling~\cite[page~95]{aB33}. For any subharmonic function~$u$ we write
\[
B(r,u)=\max_{|z|=r}u(z),\;\text{where } r>0.
\]

\begin{lemma}\label{Beur}
If~$u$ is subharmonic in~$\C$, $0<r_1<r_2$, and
\[
E(r_1,r_2)=\{r\in [r_1,r_2]:\inf_{|z|=r}u(z)\le 0\},
\]
then
\begin{equation}\label{Beur-est}
B(r_2,u)> \frac12 \exp\left(\frac12\int_{E(r_1,r_2)} \frac{1}{t}\,dt\right)B(r_1,u).
\end{equation}
In particular, if $E(r_1,r_2)=[r_1,r_2]$, then
\[
B(r_2,u)\ge \frac12 \left(\frac{r_2}{r_1}\right)^{1/2} B(r_1,u).
\]
\end{lemma}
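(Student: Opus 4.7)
The plan is to reduce to a harmonic measure estimate on a half-annulus by combining circular symmetrization with the conformal map $w=\sqrt{z}$; the two $1/2$'s in the conclusion will both reflect this square-root change of variables.

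First I would apply circular symmetrization to obtain a subharmonic function $u^*$ with $B(r,u^*)=B(r,u)$ for all $r>0$ and with $u^*(-r)=\inf_{|z|=r}u(z)$. In particular, $u^*(-r)\le 0$ whenever $r\in E(r_1,r_2)$, while the maximum of $u^*$ on $|z|=r$ is attained at $z=r$. Slitting the annulus $\{r_1<|z|<r_2\}$ along the whole segment $[-r_2,-r_1]$ produces a simply connected region, and the principal branch of $\sqrt{z}$ maps it conformally onto the right half-annulus
\[
H=\{w:\Real w>0,\ r_1^{1/2}<|w|<r_2^{1/2}\}.
\]
The pull-back $v(w)=u^*(w^2)$ is subharmonic on $H$; on $\partial H$ it satisfies $v\le B(r_1,u)$ on the inner half-circle, $v\le B(r_2,u)$ on the outer half-circle, and $v(it)\le 0$ whenever $t^2\in E(r_1,r_2)$, while $v(r_2^{1/2})=B(r_2,u)$ because the maximum of $u^*$ is attained at $z=r_2$.

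Next I would apply the two-constants theorem to $v$ on $H$, bounding $v$ on the unconstrained parts of the imaginary axis by $B(r_2,u)$ via the monotonicity of $B(r,u)$ in $r$ for subharmonic functions. Passing to logarithmic coordinates $\zeta=\log w$ transforms $H$ into the rectangle
\[
R=\{\zeta:\tfrac12\log r_1<\Real\zeta<\tfrac12\log r_2,\ |\operatorname{Im}\zeta|<\pi/2\},
\]
and the constrained portions of the imaginary axis become subsets of the horizontal edges $\operatorname{Im}\zeta=\pm\pi/2$ of total linear measure $\tfrac12\int_{E(r_1,r_2)} dt/t$, after the substitution $t=e^s$. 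The main obstacle is then an explicit harmonic measure estimate on $R$: one must show that harmonic measure of this constrained set, evaluated at a suitable point near the right edge of $R$, controls $\log\bigl(B(r_2,u)/B(r_1,u)\bigr)$ by at least $\tfrac12\int_{E(r_1,r_2)} dt/t$. I would obtain this by comparison with harmonic measure in a half-strip or half-plane, where an explicit Poisson-kernel computation gives the required bound; the exponent $\tfrac12$ then arises naturally from the combination of the $\sqrt{z}$ map used to form $H$ and the width $\pi$ (rather than $2\pi$) of the rectangle $R$.

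The special case $E(r_1,r_2)=[r_1,r_2]$ serves as a sanity check and admits a shorter proof: the constrained set is then the entire pair of horizontal edges of $R$, so $v^+$ extends by zero across the imaginary axis to a subharmonic function on the full annulus $\{r_1^{1/2}<|w|<r_2^{1/2}\}$, and a Hadamard three-circles argument combined with $v(r_2^{1/2})=B(r_2,u)$ yields the desired estimate $B(r_2,u)\ge\tfrac12(r_2/r_1)^{1/2}B(r_1,u)$, matching the general conclusion when $\int_{E(r_1,r_2)} dt/t=\log(r_2/r_1)$.
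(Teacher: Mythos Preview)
The paper does not prove this lemma; it is quoted from Beurling's 1933 thesis without argument. So there is no in-paper proof to compare against, but your outline has a genuine gap at the very first step.

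Circular symmetrization of a subharmonic function is \emph{not} subharmonic in general. This is a well-known obstruction and is precisely why Baernstein introduced his star function (a different, integrated object) to attack problems of this type. Once $u^*$ fails to be subharmonic, the transplant $v(w)=u^*(w^2)$ need not satisfy the maximum principle on the half-annulus $H$, and your two-constants step has nothing to stand on. The same defect undermines your ``shorter proof'' of the special case, since that too relies on $v^+$ being subharmonic. The subsequent bookkeeping with $\sqrt{z}$, the logarithmic rectangle, and harmonic measure has the right shape, but it rests on an object that lacks the property you need.

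The classical route, essentially Beurling's own, avoids symmetrization. One works directly in the component $\Omega$ of $\{z:r_1<|z|<r_2,\ u(z)>0\}$ containing a point on $|z|=r_1$ where $B(r_1,u)$ is attained (assuming $B(r_1,u)>0$, else there is nothing to prove). For each $r\in E(r_1,r_2)$ the circle $|z|=r$ is not entirely contained in $\Omega$, so the angular measure $\theta(r)$ of $\Omega\cap\{|z|=r\}$ is at most $2\pi$ there. Applying the Carleman--Ahlfors growth inequality (equivalently, an extremal-length estimate) to $u$ on $\Omega$ bounds $\log\bigl(B(r_2,u)/B(r_1,u)\bigr)$ below by an integral of $\pi/(r\theta(r))$, and since $\pi/\theta(r)\ge 1/2$ on $E(r_1,r_2)$ one recovers exactly $\tfrac12\int_{E(r_1,r_2)}dt/t$, with the multiplicative factor $\tfrac12$ arising from the endpoint comparison. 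If you wish to retain a conformal-mapping picture, apply it to $\Omega$ itself rather than to a symmetrized auxiliary function.
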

In order to work with property~\eqref{minmodprop}, the function
\[
\mt(r) := \max\{ m(s):0 \leq s \leq r\}, \qfor r \in [0, \infty),
\]
was introduced in \cite{ORS2} and it was shown that property~(\ref{minmodprop}) is true if and only if
\begin{equation}\label{mtequiv}
\text{there exists } R > 0 \text{ such that } \mt(r) >r, \text{ for } r \geq R.
\end{equation}
We use this equivalent property to prove Theorem~\ref{main1}.
\begin{proof}[Proof of Theorem~\ref{main1}]
Suppose the hypotheses of Theorem~\ref{main1} hold. If property~\eqref{minmodprop} does not hold, then by \eqref{mtequiv} there exist arbitrarily large $r>r_0$ such that $\mt(r) \le r$; that is, for arbitrarily large~$r$ we have
\begin{equation}\label{mtr}
m(t) \le r,\qfor 0<t\le r.
\end{equation}
For such an~$r$, there exists by hypothesis $s<r$ such that $\log M(s)\ge 2\log r$ and \eqref{main-crit} holds. However, we deduce from \eqref{mtr} by applying Lemma~\ref{Beur} to $u(z)=\log(|f(z)|/r)$ that
\[
\log M(r)>\log \frac{M(r)}{r} > \frac12\left(\frac{r}{s}\right)^{1/2}\log \frac{M(s)}{r}\ge \frac14\left(\frac{r}{s}\right)^{1/2}\log M(s),
\]
which contradicts condition \eqref{main-crit}.
\end{proof}

For applications of Theorem~\ref{main1}, it is often useful to express condition~\eqref{main-crit} in terms of the functions $\eps(r), r>0,$  and $k(r), r>0,$ defined as follows:
\[
\log M(r)=r^{1/2-\eps(r)}\quad \text{and}\quad k(r)=\eps(r) \log r;
\]
equivalent forms of condition~\eqref{main-crit} are then
\begin{equation}\label{k-crit1}
s^{\eps(s)}\le \frac14 r^{\eps(r)}\quad\text{and}\quad k(r)\ge k(s)+\log 4,
\end{equation}
for $r>r_0$, where $M(s)\ge r^2$ as before.

Note that if $f$ has order at most 1/2, minimal type, then for all sufficiently large values of~$r$ we have $0<\eps(r)<1/2$ and hence $0<k(r)=\eps(r)\log r<\frac12 \log r$.

It is also useful to note that if~$f$ has order 1/2 then:
\begin{itemize}
\item[(a)] $f$ has lower order 1/2 if and only if $\eps(r)\to 0$ as $r\to\infty$,
\item[(b)] $f$ is of minimal type if and only if $k(r)\to\infty$ as $r\to \infty$.
\end{itemize}

We now give two special cases of Theorem~\ref{main1} which are convenient for applications. Theorem~\ref{main2}, part~(a), shows that property \eqref{minmodprop} holds whenever the function~$f$ has positive lower order and~$\eps(r)=k(r)/\log r$ does not tend to 0 too quickly. Example~\ref{main5} shows that positive lower order alone is not sufficient here.

\begin{theorem}\label{main2}
Let $f$ be a {\tef} of order 1/2, minimal type, and let $\eps(r), r>0,$ and $k(r), r>0,$ be defined as above. Then there exists $r>0$ such that $m^n(r) \to \infty$ as $n \to \infty$ if either of the following statements holds:
\begin{itemize}
\item[(a)] there exist $\delta\in (0,1/2)$, $C>1$ and $r_1>0$ such that
\begin{equation}\label{k-crit2}
\delta \log r \ge k(r) \ge C\frac{\delta}{1/2-\delta} \log\log r, \qfor r>r_1;
\end{equation}
\item[(b)] $\eps(r) \to 0$ as $r\to \infty$ and there exist $C,d>1$ and $r_2>0$ such that
\begin{equation}\label{k-crit3}
k(r) \ge Ck((\log r)^{2d}), \qfor r>r_2.
\end{equation}
\end{itemize}
\end{theorem}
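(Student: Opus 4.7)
The plan is to verify, in each of parts~(a) and~(b) and for all sufficiently large $r$, the equivalent form \eqref{k-crit1} of the hypothesis of Theorem~\ref{main1}: that is, to exhibit some $s \in (0, r)$ with $\log M(s) \geq 2 \log r$ and $k(r) \geq k(s) + \log 4$. Once this is done, Theorem~\ref{main1} delivers the conclusion directly.

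For part~(a), I would take $s$ to be just above $(2 \log r)^{2/(1-2\delta)}$, which lies in $(r_1, r)$ once $r$ is large. The upper bound $k(r) \leq \delta \log r$ from \eqref{k-crit2} gives $\eps(t) \leq \delta$ for all $t > r_1$, so
\[
\log M(s) = s^{1/2 - \eps(s)} \geq s^{1/2 - \delta} \geq 2 \log r
\]
by the choice of $s$. The same upper bound gives $k(s) \leq \delta \log s = \tfrac{\delta}{1/2 - \delta} \log\log r + O(1)$, and together with the lower bound on $k(r)$ in \eqref{k-crit2} this yields
\[
k(r) - k(s) \geq (C-1)\tfrac{\delta}{1/2-\delta} \log\log r - O(1),
\]
which exceeds $\log 4$ for large $r$, since $C > 1$.

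For part~(b), I would take $s = (\log r)^{2d}$, which satisfies $s < r$ for large $r$. Since $\eps(s) \to 0$ and $s \to \infty$ with $r$, for large $r$ we have $2d(\tfrac12 - \eps(s)) \geq (d+1)/2 > 1$, so
\[
\log M(s) = (\log r)^{2d(1/2 - \eps(s))} \geq (\log r)^{(d+1)/2} \geq 2 \log r.
\]
The hypothesis \eqref{k-crit3} gives $k(r) \geq C k(s)$, hence $k(r) - k(s) \geq (C-1) k(s)$. Since $f$ has minimal type we have $k(s) \to \infty$, so this difference exceeds $\log 4$ for sufficiently large $r$.

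The main obstacle is the balancing act in the choice of $s$: it must be large enough to force $\log M(s) \geq 2 \log r$, but small enough that $k(s)$ falls short of $k(r)$ by at least $\log 4$. The factor $C\delta/(1/2-\delta)$ in \eqref{k-crit2} and the exponent $2d$ in \eqref{k-crit3} are precisely calibrated to make these two competing requirements compatible, and verifying this calibration carefully is the essential content of each case.
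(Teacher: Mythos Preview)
Your proposal is correct and follows essentially the same route as the paper: in both parts you verify the equivalent criterion~\eqref{k-crit1} of Theorem~\ref{main1} with the same choice $s \approx (2\log r)^{1/(1/2-\delta)}$ in part~(a), and a power of $\log r$ in part~(b). Your choice $s=(\log r)^{2d}$ in part~(b) is in fact slightly more direct than the paper's $s=(\log r)^{1/(1/2-\delta)}$ (with $1/(1/2-\delta)<2d$), since it matches the hypothesis~\eqref{k-crit3} exactly and so avoids any question of how to pass between the two exponents.
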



\begin{proof}
By Theorem~\ref{main1}, it is sufficient to show that, for all sufficiently large~$r$, condition \eqref{main-crit} holds for some~$s\in (0,r)$ with $M(s)\ge r^2$.

(a) The inequality $k(r)\le\delta\log r$ can be written as $\eps(r)\le \delta$, that is,
\[
\log M(r)\ge r^{1/2-\delta},
\]
so the condition $M(s)\ge r^2$ is satisfied by taking $s=(2\log r)^{1/(1/2-\delta)}$. With this choice of~$s$ the required condition \eqref{main-crit}, or equivalently \eqref{k-crit1}, can be written as
\begin{equation}\label{k-crit2}
k(r)\ge k((2\log r)^{1/(1/2-\delta)})+\log 4,\qfor r>r_0.
\end{equation}
The hypothesis $k(r)\le\delta\log r$ for $r>r_1$ implies that
\begin{align*}
k((2\log r)^{1/(1/2-\delta)})&\le\delta\log \left((2\log r)^{1/(1/2-\delta)}\right)\\
&=\frac{\delta}{1/2-\delta}(\log 2+\log\log r),\qfor (2\log r)^{1/(1/2-\delta)}>r_1,
\end{align*}
so a condition of the form
\[
k(r)\ge \frac{\delta}{1/2-\delta}(\log 2+\log\log r)+\log 4,\qfor r>r_1,
\]
is sufficient to imply that \eqref{k-crit2} and hence \eqref{main-crit} hold for some $r_0\ge r_1$. This proves part~(a).

(b) We first choose $\delta>0$ such that $1/(1/2-\delta)<2d$. The hypothesis that $\eps(r)\to 0$ as $r\to\infty$ implies that there exists $r(C,d)>0$ such that $\eps(r)\le\delta/C$ for $r>r(C,d)$, so
\[
\log M(r) \ge r^{1/2-\delta/C}, \qfor r>r(C,d).
\]
Therefore, the condition $M(s)\ge r^2$ is satisfied, for sufficiently large~$r$, by taking $s=(\log r)^{1/(1/2-\delta)}$, since $C>1$.
With this choice of~$s$ the required condition \eqref{main-crit}, or equivalently \eqref{k-crit1}, can be written as
\[
k(r)\ge k((\log r)^{1/(1/2-\delta)})+\log 4,\qfor r>r_0,
\]
so a condition of the form
\[
k(r)\ge Ck((\log r)^{1/(1/2-\delta)}),\qfor r>r_2,
\]
is sufficient to imply that \eqref{main-crit} holds for some $r_0\ge \max\{r_2,r(C,d)\}$. This proves part~(b).
\end{proof}

Theorem~\ref{main2} can be used to give many examples of {\tef}s of order 1/2, minimal type, for which property \eqref{minmodprop} holds, including ones for which we can deduce that $I(f)$ is connected by using Theorem~\ref{Ifconn}. We give here some examples of this type, derived from Theorem~\ref{main2}, part~(b), in which the functions have a very regular distribution of zeros.

For any {\tef}~$f$ we define $n(r)=n(r,f)$ to be the number of zeros of~$f$ in $\{z: |z| \leq r\}$, counted according to multiplicity.

\begin{theorem}\label{main3}
Let $\eps(r)$ satisfy the hypotheses of Theorem~\ref{main2}, part~(b) and in addition suppose that $\eps(r)$ is decreasing and $k(r)=\eps(r)\log r$ is increasing.

Let~$f$ be a {\tef} of the form
\[
f(z)=\prod_{n\in \N}\left(1-\frac{z}{a_n}\right),\quad 0<|a_1|<|a_2|< \cdots,
\]
where
\[
n(r,f)\sim r^{1/2-\eps(r)}\;\text{ as } r\to\infty.
\]
Then
\begin{itemize}
\item[(a)] there exist constants $1<A<B$ and $R>0$ such that
\[
Ar^{1/2-\eps(r)}\le \log M(r,f)\le Br^{1/2-\eps(r)},\qfor r>R,
\]
\item[(b)] there exists $r>0$ such that $m^n(r) \to \infty$ as $n \to \infty$.
\end{itemize}
\end{theorem}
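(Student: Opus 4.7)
\emph{Proof plan.} For part~(a), the approach is to compare $\log M(r,f)$ with the counting function $n(r,f)$ via two classical estimates for a canonical product of genus zero (which $f$ is, since $\sum 1/|a_n|$ converges). Jensen's formula gives the lower bound
\[
\log M(r,f)\ge N(r):=\int_0^r\frac{n(t,f)}{t}\,dt,
\]
while $\log M(r,f)\le\sum_n\log(1+r/|a_n|)$ followed by integration by parts yields
\[
\log M(r,f)\le N(r)+r\int_r^\infty\frac{n(t,f)}{t^2}\,dt.
\]
The task therefore reduces to showing that $N(r)\asymp r^{1/2-\eps(r)}$ and that the tail integral is $O(r^{1/2-\eps(r)})$.

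For the tail, I would substitute $t=e^u$ so that, up to the asymptotic constant in $n(t,f)\sim t^{1/2-\eps(t)}$, the expression becomes $r^{1/2}\int_0^\infty e^{-v/2-K(v+\log r)}\,dv$, where $K(u)=k(e^u)$. Since $k$ is increasing, $K(v+\log r)\ge k(r)$ for $v\ge 0$, giving a bound of order $r^{1/2-\eps(r)}$. For the upper bound on $N(r)$, the monotonicity of $\eps$ gives $t^{-1/2-\eps(t)}\le t^{-1/2-\eps(r)}$ for $r_0\le t\le r$, which integrates to $O(r^{1/2-\eps(r)})$. For the lower bound on $N(r)$, I would restrict to $[r/2,r]$ and observe that, for such $t$,
\[
(1/2-\eps(t))\log t-(1/2-\eps(r))\log r=-\tfrac12\log(r/t)+(k(r)-k(t))\ge -\tfrac12\log 2,
\]
using that $k$ is increasing; this produces $n(t,f)\ge c\,r^{1/2-\eps(r)}/\sqrt 2$ on $[r/2,r]$, and hence $N(r)\ge c'r^{1/2-\eps(r)}$. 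Assembling these estimates proves~(a).

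For part~(b), I would apply Theorem~\ref{main2}(b). Define $\tilde\eps(r)$ by $\log M(r,f)=r^{1/2-\tilde\eps(r)}$; this is the quantity called $\eps$ in Theorem~\ref{main2}. The two-sided bound of~(a) yields $\tilde\eps(r)=\eps(r)+O(1/\log r)$ and $\tilde k(r):=\tilde\eps(r)\log r=k(r)+O(1)$. Hence $\tilde\eps(r)\to 0$, and because $k(r)\to\infty$ the hypothesis $k(r)\ge Ck((\log r)^{2d})$ with $C>1$ transfers to $\tilde k(r)\ge \tilde C\,\tilde k((\log r)^{2d})$ for any fixed $\tilde C\in(1,C)$ and $r$ sufficiently large. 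Theorem~\ref{main2}(b) then delivers property~\eqref{minmodprop} for~$f$.

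The main obstacle is the lower bound $N(r)\ge c\,r^{1/2-\eps(r)}$: a naive use of $n(t,f)\ge c\,t^{1/2-\eps(t)}$ integrated over $[0,r]$ loses too much, and the argument succeeds only because the opposite monotonicities of $\eps$ (decreasing) and $k$ (increasing) keep $(1/2-\eps(t))\log t$ within $\tfrac12\log 2$ of its value at $t=r$ throughout the short interval $[r/2,r]$.
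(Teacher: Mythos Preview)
Your overall strategy matches the paper's: use the sandwich $N(r)\le\log M(r)\le N(r)+Q(r)$ for part~(a), then pass from $\eps$ to $\tilde\eps$ via $\tilde k=k+O(1)$ and invoke Theorem~\ref{main2}(b) for part~(b). Your treatment of~$Q(r)$, of the upper bound for $N(r)$, and of part~(b) is essentially the same as the paper's.

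The one place you diverge is the lower bound for $N(r)$, and here your detour costs you the theorem as stated. Restricting to $[r/2,r]$ and using your inequality gives
\[
N(r)\;\ge\;\int_{r/2}^{r}\frac{n(t)}{t}\,dt\;\ge\;\frac{1+o(1)}{\sqrt{2}}\,r^{1/2-\eps(r)}\log 2\;\approx\;0.49\,r^{1/2-\eps(r)},
\]
so your constant $A$ is below~$1$, whereas the statement requires $1<A<B$. (This does not affect part~(b), which only needs $A>0$.) Your closing remark that ``a naive use of $n(t,f)\ge c\,t^{1/2-\eps(t)}$ integrated over $[0,r]$ loses too much'' is in fact the opposite of the truth: because $k$ is increasing, $t^{\eps(t)}\le r^{\eps(r)}$ for $t\le r$, so
\[
N(r)=\int_0^r\frac{(1+o(1))\,dt}{t^{1/2+\eps(t)}}\;\ge\;\frac{1+o(1)}{r^{\eps(r)}}\int_0^r\frac{dt}{t^{1/2}}=2(1+o(1))\,r^{1/2-\eps(r)}.
\]
This is exactly what the paper does, and it delivers $A$ close to~$2$ with less work than your short-interval argument.
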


{\it Remark}\; A family of functions that satisfy the hypotheses of Theorem~\ref{main2}, part~(b), and Theorem~\ref{main3} is given by
\[
k(r)=\alpha(\log^n r)^{\beta},\quad \alpha,\beta>0, n\ge 2,
\]
where $\log^n$ denotes the $n$-th iterated logarithm.

For the proof of Theorem~\ref{main3}, we require a result about the relationship between $M(r)=M(r,f)$ and the following quantities:
\[
N(r) = \int_0^r \frac{n(t)}{t}\,dt\quad\text{and}\quad Q(r) = r \int_r^{\infty} \frac{n(t)}{t^2}\, dt.
\]

We use the following estimates; see~\cite[Lemma 3.3]{RS08}, for example.

\begin{lemma}\label{NMQ}
Let~$f$ be a transcendental entire function of order less than~$1$ with $f(0) = 1$. Then, for $r > 0$,
\[
N(r) \le \log M(r) \leq N(r) + Q(r).
\]
\end{lemma}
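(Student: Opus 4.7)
The plan is to use the Hadamard factorisation of $f$ together with Jensen's formula for the lower bound, and a direct triangle-inequality estimate followed by integration by parts for the upper bound.

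Since $f$ is an entire function of order $\rho<1$ with $f(0)=1$, the convergence exponent of the zero sequence $\{a_n\}$ is at most $\rho<1$, so $\sum 1/|a_n|<\infty$ (this follows from $n(t)=O(t^{\rho+\eps})$ and an integration by parts, choosing $\eps$ so that $\rho+\eps<1$). The Hadamard factorisation theorem therefore gives
\[
f(z)=\prod_{n}\left(1-\frac{z}{a_n}\right),
\]
with no exponential factor and no extra constant (the latter because $f(0)=1$).

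For the lower bound, I would apply Jensen's formula: since $f(0)=1$,
\[
\frac{1}{2\pi}\int_0^{2\pi}\log|f(re^{i\theta})|\,d\theta=N(r).
\]
Replacing the integrand by its maximum over $\theta$ gives $\log M(r)\ge N(r)$.

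For the upper bound, the triangle inequality applied termwise in the product yields
\[
\log M(r)\le \sum_n\log\!\left(1+\frac{r}{|a_n|}\right)=\int_0^\infty \log\!\left(1+\frac{r}{t}\right)dn(t).
\]
An integration by parts (the boundary terms vanish: at $t=0$ because $n(t)=0$ near $0$, and at $t=\infty$ because $n(t)\log(1+r/t)\sim rn(t)/t\to0$, using $\rho<1$) turns this into
\[
\log M(r)\le r\int_0^\infty \frac{n(t)}{t(t+r)}\,dt.
\]
Splitting the integral at $t=r$ and estimating $1/(t+r)\le 1/r$ on $(0,r)$ and $1/(t+r)\le 1/t$ on $(r,\infty)$ then yields
\[
r\int_0^r\frac{n(t)}{t(t+r)}\,dt\le\int_0^r\frac{n(t)}{t}\,dt=N(r),\qquad r\int_r^\infty\frac{n(t)}{t(t+r)}\,dt\le r\int_r^\infty\frac{n(t)}{t^2}\,dt=Q(r),
\]
and adding gives $\log M(r)\le N(r)+Q(r)$.

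The main technical point to handle carefully is the justification that the Hadamard product has no exponential factor (so the factor $|1-z/a_n|$ really is bounded by $1+r/|a_n|$) and that the boundary terms in the integration by parts vanish; both of these rest on $\rho<1$ via the standard estimate $n(t)=O(t^{\rho+\eps})$. Beyond this, the argument is elementary.
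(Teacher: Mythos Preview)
Your argument is correct: Jensen's formula gives the lower bound, and the Hadamard product combined with the triangle inequality, integration by parts, and the split at $t=r$ gives the upper bound; the boundary terms and the absence of an exponential factor are properly justified by $\rho<1$. The paper does not actually prove this lemma but simply cites \cite[Lemma~3.3]{RS08}, where essentially the same standard computation appears, so your approach matches what the authors have in mind.
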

\begin{proof}[Proof of Theorem~\ref{main3}]
Since $n(r)\sim r^{1/2-\eps(r)}$ as $r\to\infty$, where $\eps(r)$ is positive and decreasing to 0, and $k(r)=\eps(r)\log r$ is increasing, we have
\[
N(r)= \int_0^r \frac{1+o(1)}{t^{1/2+\eps(t)}}\,dt\ge \frac{(1+o(1))}{r^{\eps(r)}}\int_0^r \frac{dt}{t^{1/2}}=2 r^{1/2-\eps(r)}(1+o(1))
\]
and
\[
Q(r) = r\int_r^{\infty}\frac{1+o(1)}{t^{3/2+\eps(t)}}\,dt\le (1+o(1))r^{1-\eps(r)}\int_r^{\infty}\frac{dt}{t^{3/2}} = 2 r^{1/2-\eps(r)}(1+o(1)),
\]
so
\[
Q(r)\le N(r)(1+o(1)) \;\text{ as }r\to\infty.
\]
Since $\eps(r)$ is decreasing, we have
\[
N(r)= \int_0^r \frac{(1+o(1))}{t^{1/2+\eps(t)}}\,dt\le (1+o(1))\int_0^r \frac{dt}{t^{1/2+\eps(r)}}=\frac{1+o(1)}{1/2-\eps(r)}r^{1/2-\eps(r)},
\]
so
\[
N(r)+Q(r)\le 4r^{1/2-\eps(r)}(1+o(1)) \;\text{ as }r\to\infty.
\]
Part~(a) follows by Lemma~\ref{NMQ} and the above inequalities for $N(r)$ and $Q(r)$.

It follows from part~(a) that
\[
\log M(r,f)=r^{1/2-\hat\eps(r)},\quad\text{where}\quad \hat\eps(r)=\eps(r)+O(1)/\log r\;\text{ as }r\to\infty,
\]
so $\hat\eps(r)\to 0$ as  $r\to \infty$ and $\hat k(r)=\hat\eps(r)\log r$ satisfies
\[
\hat k(r)=k(r)+O(1)\;\text{ as }r\to\infty.
\]
Since $k(r)$ satisfies \eqref{k-crit3}, so in particular $k(r)\to\infty$ as $r\to\infty$, it follows that $\hat k(r)$ also satisfies \eqref{k-crit3}. Hence~$f$ satisfies the hypotheses of Theorem~\ref{main2}, part~(b), so the proof is complete.
\end{proof}

\section{Constructing entire functions of small order}
\label{background}
\setcounter{equation}{0}

Our method of proving Examples~\ref{main4} and~\ref{main5} uses a recent generalisation \cite{RS20} of Kjellberg's method~\cite[Chapter~2]{bK48} for constructing {\tef}s of order less than $1/2$ by approximating certain continuous subharmonic functions by functions of the form $\log |f|$ where~$f$ is a {\tef}. In this section we summarise the results from \cite{RS20} which are needed to construct our examples.

Kjellberg's method is a two stage process:
\begin{itemize}
\item[1.]
a continuous subharmonic function~$u$ with the required properties is obtained by using a positive harmonic function defined in the complement of a particular sequence of radial slits, on which~$u$ vanishes;
\item[2.]
the Riesz measure of~$u$ is discretised to produce an entire function~$f$ such that $\log |f|$ is close to~$u$ away from the zeros of~$f$.
\end{itemize}
%

The paper~\cite{RS20} gives a generalisation of Kjellberg's method which allows the slits to be chosen more flexibly than in \cite{bK48}. 

%

We now recall some of the key definitions and terminology needed to state the results from \cite{RS20}.

\begin{definition}
A subharmonic function~$u$ is in the class ${\mathcal K}$ if $u$ is continuous in $\C$ and positive harmonic in $D=\C\setminus E$, where $E\subset (-\infty,0]$ is a closed set on which $u$ vanishes. We assume that each point of~$E$ is regular for the Dirichlet problem in~$D$.
\end{definition}

{\it Remark}\; For each closed subset of the negative real axis, there is exactly one corresponding function $u\in \K$ up to positive scalar multiples, by a result of Benedicks \cite[Theorem~4]{Ben}.

Recall that for a set $S\subset \R^+$ and $r>1$, we define the {\it upper logarithmic density} of~$S$,
\[
\overline{\Lambda}(S) = \limsup_{r\to \infty}\frac{1}{\log r}\int_{S\cap(1,r)}\frac{dt}{t},
\]
and the {\it lower logarithmic density} of $S$,
\[
\underline{\Lambda}(S) = \liminf_{r\to \infty}\frac{1}{\log r}\int_{S\cap(1,r)}\frac{dt}{t}.
\]
When $\overline{\Lambda}(S)=\underline{\Lambda}(S)$ we speak of the {\it logarithmic density} of $S$, denoted by $\Lambda(S)$.

Recall next that, for a continuous subharmonic function~$u$ in $\C$,
\[
A(r)=A(r,u)=\min_{|z|=r} u(z)\quad\text{and}\quad B(r)=B(r,u)=\max_{|z|=r} u(z),
\]
and the {\it order} and {\it lower order} of $u$ are
\[
\rho(u)=\limsup_{r\to\infty}\frac{\log B(r)}{\log r}\quad \text{and}\quad \lambda(u)=\liminf_{r\to\infty}\frac{\log B(r)}{\log r},
\]
respectively. For all $u\in \K$ we have $0\le \lambda(u) \le \rho(u)\le 1/2$.

The following result gives some basic properties of all functions in the class $\K$; see \cite[Theorem~1.2]{RS20}.
\begin{theorem}\label{basic-props}
Let $u\in \K$, with $E$ the corresponding closed subset of the negative real axis. Then $u$ has the following properties.
\begin{itemize}
\item[(a)]
Monotonicity properties:
for all $r>0$,
\[
u(re^{i\theta})\;\;\text{is decreasing as a function of } \theta, \text{ for } 0\le \theta \le \pi,
\]
so, in particular, $B(r,u)=u(r)$ and $A(r,u)=u(-r)$ for all $r>0$. Also,
\[
\frac{u(r)}{r^{1/2}}=\frac{B(r,u)}{r^{1/2}}\;\;\text{is decreasing for } r>0,
\]
so, in particular, $\rho(u)\le 1/2$.
\item[(b)] Bounds for order and lower order:
\[
\rho(u)\ge \tfrac12 \overline{\Lambda}(E^*)\quad {\text and} \quad \lambda(u)\ge \tfrac12 \underline{\Lambda}(E^*),
\]
where $E^*=\{x:-x\in E\}$.
\end{itemize}
\end{theorem}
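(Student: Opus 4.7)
The plan is to split Theorem~\ref{basic-props} into part~(b), which follows directly from Beurling's inequality Lemma~\ref{Beur}, and part~(a), which bundles together three separate monotonicity/symmetry statements.

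For part~(a), I first establish the reflection symmetry $u(\bar z)=u(z)$. Since $E\subset\R$, the pullback $u\circ\sigma$ under $\sigma(z)=\bar z$ again lies in $\K$ with the same exceptional set~$E$, so the uniqueness up to positive scalar multiple noted after the definition of~$\K$ gives $u\circ\sigma=cu$, and comparing values at any positive real point where $u>0$ forces $c=1$. Next, I would handle the radial monotonicity of $u(r)/r^{1/2}$ by transporting $u$ to the upper half-plane~$\mathbb{H}$ via the conformal map $w\mapsto -w^{2}$, which sends $\mathbb{H}$ onto $\C\setminus(-\infty,0]\subset\C\setminus E$. The function $V(w):=u(-w^{2})$ is then non-negative and harmonic on~$\mathbb{H}$, with non-negative boundary values $V(t)=u(-t^{2})$ on $\R$; a Herglotz--Nevanlinna representation on~$\mathbb{H}$ yields a decomposition of the form
\[
V(w)=b\,\mathrm{Im}\,w+\frac{\mathrm{Im}\,w}{\pi}\int_{-\infty}^{\infty}\frac{d\mu(t)}{|t-w|^{2}},\qquad b\ge 0,
\]
with $\mu$ a positive Borel measure on $\R$. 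Specialising to $w=iy$ gives
\[
\frac{u(y^{2})}{y}=\frac{V(iy)}{y}=b+\frac{1}{\pi}\int_{-\infty}^{\infty}\frac{d\mu(t)}{t^{2}+y^{2}},
\]
which is manifestly non-increasing in $y>0$; the substitution $r=y^{2}$ then establishes the monotonicity of $u(r)/r^{1/2}$ and, in particular, forces $\rho(u)\le 1/2$.

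The delicate point, which I expect to be the main obstacle, is the angular monotonicity $\theta\mapsto u(re^{i\theta})$ decreasing on $[0,\pi]$; without it one cannot even conclude that $B(r,u)=u(r)$. When $E=(-\infty,0]$, Benedicks's uniqueness yields $u(re^{i\theta})=c\,r^{1/2}\cos(\theta/2)$, for which monotonicity is transparent, but for a general closed $E\subsetneq(-\infty,0]$ the function $u$ is strictly positive on the complementary parts of the negative axis and a direct differentiation of the Poisson-type representation produces contributions of conflicting sign. The approach I would take is by symmetrisation: form a suitable polarisation $\hat u$ of $u$ with respect to the real axis, verify that $\hat u$ is again a non-negative continuous subharmonic function in $\C$ that is positive harmonic off~$E$ and vanishes on~$E$ (using that $E\subset\R$ is preserved by real-axis reflection and that polarisation preserves positivity and harmonicity), so $\hat u\in\K$ with the same $E$, and invoke Benedicks's uniqueness to conclude $\hat u=u$. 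Since a symmetric-decreasing rearrangement of this kind is angularly monotone on $[0,\pi]$ by construction, this would close the argument, and the identities $B(r,u)=u(r)$, $A(r,u)=u(-r)$ follow immediately.

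For part~(b) the plan is a direct application of Lemma~\ref{Beur}. Because $u\ge 0$ on~$\C$ with equality precisely on~$E$, the set
\[
E(r_1,r_2)=\{r\in[r_1,r_2]:\inf_{|z|=r}u(z)\le 0\}
\]
coincides with $E^{*}\cap[r_1,r_2]$. Fixing $r_1>0$ small enough that $B(r_1,u)>0$ and letting $r_2=r\to\infty$, I take logarithms in~\eqref{Beur-est} and divide through by $\log r$; passing to $\limsup$ converts the logarithmic integral $\int_{E^{*}\cap[r_1,r]}dt/t$ directly into $\overline{\Lambda}(E^{*})$, producing the bound $\rho(u)\ge\tfrac12\overline{\Lambda}(E^{*})$, while passing to $\liminf$ yields the analogous bound $\lambda(u)\ge\tfrac12\underline{\Lambda}(E^{*})$ for the lower order.
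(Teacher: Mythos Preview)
The paper does not prove Theorem~\ref{basic-props}; it quotes it from \cite[Theorem~1.2]{RS20}, so there is no in-paper argument to compare against. Judged on its own, your plan for part~(b) is correct: since $u\ge 0$ with equality precisely on~$E$, the Beurling set $E(r_1,r)$ is exactly $E^{*}\cap[r_1,r]$, and taking logarithms in \eqref{Beur-est}, dividing by $\log r$ and passing to $\limsup$/$\liminf$ gives the stated bounds. Your treatment of the reflection symmetry via Benedicks uniqueness, and of the monotonicity of $u(r)/r^{1/2}$ via the Herglotz representation of $V(w)=u(-w^{2})$ on the half-plane, is also sound.

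There is, however, a genuine gap in the angular monotonicity step. ``Polarisation of $u$ with respect to the real axis'' cannot do any work here: you have already shown $u(\bar z)=u(z)$, and polarisation across a hyperplane fixes every function that is symmetric about that hyperplane, so $\hat u=u$ trivially and no angular information is gained. If instead you intend a circular (symmetric-decreasing) rearrangement about the positive real axis, then indeed $\hat u(re^{i\theta})$ is non-increasing in $\theta\in[0,\pi]$ by construction, but such rearrangements preserve \emph{sub}harmonicity, not harmonicity; the rearranged function will in general fail to be harmonic on $\C\setminus E$, so it is not in $\K$ and Benedicks's uniqueness cannot be invoked to force $\hat u=u$. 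Polarising across lines through the origin other than $\R$ does not help either, since $E$ is not invariant under those reflections and the polarised function acquires new zero sets off the negative axis. You therefore need a different mechanism for the angular monotonicity---for instance a maximum-principle comparison of $u(z)$ with a rotated copy on a suitable sector, or a Baernstein star-function argument---before the identifications $B(r,u)=u(r)$ and $A(r,u)=u(-r)$, and hence the rest of part~(a), go through.
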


The next result, again taken from~\cite[Theorem~1.5]{RS20}, concerns the behaviour of certain functions in the class $\K$.
\begin{theorem}\label{min-type}
Let $u\in \K$, with $E$ the corresponding closed subset of the negative real axis. If
\[E^c\supset\bigcup_{n\ge 0}(-d_n,-c_n),\]
where $0\le c_0<d_0<c_1<d_1< \cdots,$ and $\limsup_{n\to\infty} d_{n}/c_n>1$, then
\[
\frac{u(r)}{r^{1/2}}\to 0\;\;\text{as }r\to\infty.
\]
\end{theorem}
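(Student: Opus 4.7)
The plan is to argue by contradiction: by the decreasing property of $r \mapsto u(r)/r^{1/2}$ in Theorem~\ref{basic-props}(a), the limit $\alpha := \lim_{r \to \infty} u(r)/r^{1/2}$ exists in $[0, \infty)$, and I will assume $\alpha > 0$ and seek a contradiction. The strategy is to rescale $u$ along a subsequence on which the gap-ratio hypothesis is active, extract a limit $u_\infty$ whose associated set $E_\infty$ still contains a genuine open gap, and then show via a Phragm\'en--Lindel\"of argument in the half-plane that this $u_\infty$ must nevertheless vanish on all of $(-\infty, 0]$.

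Concretely, I choose a subsequence $(n_k)$ with $d_{n_k}/c_{n_k} \to L > 1$ and set $u_k(z) := u(c_{n_k} z)/c_{n_k}^{1/2}$, so that $u_k \in \K$ with associated set $E_k := E/c_{n_k}$. Combining the angular monotonicity with $u(r)/r^{1/2} \searrow \alpha$ from Theorem~\ref{basic-props}(a) gives $u_k(z) \le u_k(|z|) \to \alpha|z|^{1/2}$ as $k \to \infty$, so the $u_k$ are locally uniformly bounded. A normal-family argument for harmonic functions on $\C \setminus (-\infty, 0]$, combined with Hausdorff compactness of closed subsets of bounded intervals, allows a further subsequence along which $u_k \to u_\infty$ locally uniformly on $\C \setminus (-\infty, 0]$ and $E_k \to E_\infty$ in Hausdorff distance on compacts of $(-\infty, 0]$. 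The limit $u_\infty$ is non-negative, harmonic on $\C \setminus E_\infty$, vanishes on $E_\infty$, and satisfies $u_\infty(r) = \alpha r^{1/2}$ for all $r > 0$. Since $E_k \cap [a, b] = \emptyset$ for every $[a, b] \subset (-L, -1)$ and all large $k$, the open interval $(-L, -1)$ lies in $\C \setminus E_\infty$, and the strong minimum principle applied to $u_\infty$ on the connected domain $\C \setminus E_\infty$ (using $u_\infty(1) = \alpha > 0$) forces $u_\infty > 0$ throughout $(-L, -1)$.

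With $\wt v(z) := \Real\sqrt{z}$ (principal branch) the canonical $\K$ function for $E = (-\infty, 0]$, I set $w := u_\infty - \alpha \wt v$, which is harmonic on $\C \setminus (-\infty, 0]$. On the boundary of the upper half-plane, $w = 0$ on $(0, \infty)$ (by $u_\infty(r) = \alpha \wt v(r)$) and $w = u_\infty \ge 0$ on $(-\infty, 0]$, while $w = O(|z|^{1/2}) = o(|z|)$; Phragm\'en--Lindel\"of in the half-plane then yields $w \ge 0$ in the upper half-plane, and symmetrically in the lower. Transplanting by $W(\zeta) := w(\zeta^2)$ onto the right half-plane $\{\Real \zeta > 0\}$ produces a non-negative harmonic function $W$ that vanishes on the entire open interval $(0, \infty)$ lying in the interior of that domain, so the strong minimum principle forces $W \equiv 0$, and hence $w \equiv 0$ on $\C \setminus (-\infty, 0]$. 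By continuity $u_\infty \equiv 0$ on $(-\infty, 0]$, contradicting $u_\infty > 0$ on $(-L, -1)$. The main obstacle is the compactness step: one has to set up $u_\infty$ as a continuous subharmonic function that is harmonic off $E_\infty$ and strictly positive on its complement, and check that the Hausdorff limit $E_\infty$ genuinely leaves $(-L, -1)$ in its complement. Once this limiting object is in hand, the Phragm\'en--Lindel\"of plus half-plane reflection step dispatches the contradiction cleanly.
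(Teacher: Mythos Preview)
The paper does not prove this theorem; it is quoted verbatim from \cite[Theorem~1.5]{RS20} as background for the constructions in Section~4, so there is no in-paper proof to compare against.

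That said, your rescaling--compactness argument is a reasonable and essentially correct route to the result. The key steps all work: the monotonicity of $u(r)/r^{1/2}$ gives the limit $\alpha$; the rescalings $u_k$ are positive harmonic and locally uniformly bounded on $\C\setminus(-\infty,0]$, so a normal-family extraction is legitimate; on any compact neighbourhood of a closed subinterval of $(-L,-1)$ the $u_k$ are eventually harmonic, so the limit extends harmonically across that gap and the strong minimum principle yields $u_\infty>0$ there; and for the comparison step, setting $W(\zeta)=w(\zeta^2)$ and invoking the strong minimum principle on the connected domain $\{\Real\zeta>0\}$ (where $W\ge 0$ and $W=0$ on the interior segment $(0,\infty)$) cleanly forces $W\equiv 0$. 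The one place to be careful is the Phragm\'en--Lindel\"of step in the half-plane: you must use the version with $\liminf$ boundary conditions, since $u_\infty$ is not a priori continuous up to all of $(-\infty,0]$; but $u_\infty\ge 0$ and $\Real\sqrt{z}\to 0$ at those boundary points suffice, together with the growth bound $|w|\le \alpha|z|^{1/2}=o(|z|)$. With these standard caveats, the argument goes through.
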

The final result we need from~\cite[Theorem~1.6]{RS20} describes the approximation used in the second stage of Kjellberg's process, showing how we can approximate a function $u\in\K$ by $\log |f|$, where $f$ is entire. This generalises the result given by Kjellberg for a particular type of set~$E$; see \cite[Chapter~4]{bK48}.
\begin{theorem}\label{discretise}
Suppose that $u\in\K$ and
\[
E=\bigcup_{n\ge 0}[-b_n,-a_n],
\]
where $0\le a_0<b_0<a_1<b_1< \cdots,$ and $a_n \to \infty$ as $n\to\infty$. Put
\[
D_1=\C\setminus \{z: \text{{\rm dist}}(z,E)\le 1\}.
\]
Then there exists an entire function~$f$ with only negative zeros, all lying in the set~$E$, such that
\begin{equation}\label{R-est}
\log|f(z)|-u(z)=O\left(\log|z|\right) \;\;\text{as }z\to \infty, \qfor z\in D_1.
\end{equation}
Moreover, if we also have
\begin{equation}\label{d-cond}
b_n/a_n \ge d >1,\qfor n\ge 0,
\end{equation}
then there exists $R=R(u)>0$ such that
\begin{equation}\label{f-upper-est}
\log|f(z)|\le u(z)+ 4\log |z|, \qfor |z|\ge R.
\end{equation}
\end{theorem}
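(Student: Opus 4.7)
The plan is to follow the two-stage Kjellberg strategy summarised at the start of the section: first represent $u$ as a logarithmic potential generated by its Riesz measure $\mu$, which is supported on $E$ since $u$ is positive and harmonic off $E$, and then discretise $\mu$ into a unit point-mass measure whose atoms lie in $E$ and serve as the zeros of $f$.

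Since $\rho(u)\le 1/2<1$ by Theorem~\ref{basic-props}(a), I would write the Weierstrass-type representation
\[
u(z)=\int_0^{\infty}\log\Bigl|1+\frac{z}{t}\Bigr|\,d\nu(t)+c,
\]
where $\nu$ is the push-forward of $\mu$ under $w\mapsto -w$ (so $\nu$ is supported on $\{-w:w\in E\}\subset[0,\infty)$) and $c$ is a constant. Only a constant harmonic remainder is admissible because, in the Weierstrass-type Riesz decomposition of a subharmonic function of order less than $1$, the entire harmonic part is a polynomial of degree $0$. The counting function $N(t):=\nu([0,t])$ is continuous, non-decreasing, and satisfies $N(t)=O(t^{1/2+o(1)})$. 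I would then choose points $0<x_1<x_2<\cdots$ with each $-x_k\in E$ by the rule $N(x_k)=k-\tfrac12$, and define
\[
f(z)=\prod_{k\ge 1}\Bigl(1+\frac{z}{x_k}\Bigr),
\]
which is a convergent canonical product whose zeros all lie in~$E$.

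The main technical step is to estimate
\[
\log|f(z)|-u(z)+c=\int_0^{\infty}\log\Bigl|1+\frac{z}{t}\Bigr|\,d(\nu_d-\nu)(t),
\]
where $\nu_d=\sum_k\delta_{x_k}$. By Abel summation this reduces to controlling an integral whose integrand involves the $t$-derivative of $\log|1+z/t|$ multiplied by the bounded function $\lfloor N(t)+\tfrac12\rfloor-N(t)$. The resulting kernel is of size $|z|/(t|t+z|)$, and I would show that its integral against a bounded function is $O(\log|z|)$, uniformly for $z\in D_1$. The condition $\operatorname{dist}(z,E)\ge 1$ is exactly what prevents $|t+z|$ from becoming small when $-t$ lies in the support of $\nu$; controlling this singular kernel away from the zeros of~$f$ is where I expect most of the technical work to lie.

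For the sharper one-sided bound~\eqref{f-upper-est} under the gap condition $b_n/a_n\ge d>1$, I would bound $\log|f(z)|$ by splitting the sum $\sum_k\log|1+z/x_k|$ according to whether $x_k$ lies within a bounded multiplicative annulus around $|z|$ or not. Terms with $x_k$ outside this annulus reproduce the potential $u(z)$ up to an acceptable error via the Weierstrass representation. The gap condition ensures that each interval $[a_n,b_n]$ in the support of $\nu$ has multiplicative width at least $d$, which forces only a bounded number of the $x_k$ to lie inside any bounded multiplicative range; each such near-term contributes at most $\log(1+|z|/x_k)\le 2\log|z|$ to $\log^+|1+z/x_k|$, and a careful count of these contributions combined with a harmonic-measure estimate for the gaps of $E$ produces the explicit constant~$4$.
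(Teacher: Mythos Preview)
This theorem is not proved in the present paper at all: it is quoted verbatim from \cite[Theorem~1.6]{RS20} and used as a black box in Section~\ref{orderhalf-negative}. So there is no ``paper's own proof'' to compare your attempt against. What the paper does provide is the informal two-stage description of Kjellberg's method at the start of Section~\ref{background}, and your outline follows that description faithfully: Riesz representation of $u$ as a logarithmic potential of a measure supported on $E$, discretisation via $N(x_k)=k-\tfrac12$, and an integration-by-parts estimate of the difference. In that sense your plan is aligned with the intended method.

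That said, as a self-contained proof your sketch has real gaps. The main one is in the kernel estimate: the bounded function $\lfloor N(t)+\tfrac12\rfloor-N(t)$ is \emph{not} supported on $E^*=\bigcup[a_n,b_n]$ (it is a nonzero constant on each gap $(b_n,a_{n+1})$), so the hypothesis $\operatorname{dist}(z,E)\ge 1$ does not by itself bound $|t+z|$ from below for all relevant $t$. One has to treat the gap contributions separately, for instance by integrating the kernel exactly over each gap and using that the endpoints $-a_{n+1},-b_n$ lie in $E$; this is where the $O(\log|z|)$ actually comes from, and it needs to be written out. Your treatment of \eqref{f-upper-est} is also only a heuristic: the claim that the gap condition $b_n/a_n\ge d$ forces ``only a bounded number of the $x_k$'' in any bounded multiplicative annulus is false in general (the Riesz mass of a single interval $[a_n,b_n]$ can be arbitrarily large), and the appearance of the specific constant~$4$ requires a sharper argument than the near/far splitting you describe. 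For a complete proof you would need to consult \cite{RS20} or Kjellberg's thesis \cite{bK48}.
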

Theorem~\ref{discretise} will enable us to approximate subharmonic functions $u \in\K$ by functions of the form $\log |f|$, where $f$ is a {\tef} of the same order, lower order and type class as~$u$.

\section{Proofs of Examples~\ref{main5} and \ref{main4}}
\label{orderhalf-negative}
\setcounter{equation}{0}

We prove Examples~\ref{main5} and~\ref{main4} by using Theorems~\ref{basic-props}, \ref{min-type} and \ref{discretise} to construct entire functions that approximate suitable subharmonic functions. In each case we show that property~\eqref{minmodprop} is false by arranging that the minimum modulus of the entire function is relatively small on intervals that are relatively long, consistent with the required growth of the maximum modulus.

We begin by proving the following result, needed in the construction of Example~\ref{main5}. The proof is rather long as the set $E$ has to be chosen carefully so that the function $u\in \K$ does not grow too quickly on the positive real axis and takes large values on the negative real axis only relatively rarely.

\begin{lemma}\label{example-u}
Let~$\delta :(0,\infty) \mapsto (0,1/4)$ be a decreasing function such that
\[
\delta(r)\to 0\;\text{ as } r \to \infty,
\]
and let $a_n$ and $b_n$, $n\ge 0$, be chosen to satisfy
\begin{equation}\label{est-u1}
a_{n+1}\ge b_n^{10}, \qfor n\ge 0,
\end{equation}
\begin{equation}\label{est-u2}
\delta(\tfrac12a_{n+1})\le \frac{\log b_n}{40b_n}, \qfor n\ge 0,
\end{equation}
and
\begin{equation}\label{est-u3}
\log b_{n+1}=\frac{b_n}{\log b_n}\log a_{n+1}, \qfor n\ge 0.
\end{equation}
Then the unique subharmonic function~$u\in\K$ corresponding to the set $E=\bigcup_{n\ge 0}[-b_n,-a_n]$, with $u(1)=1$, satisfies
\[
\rho(u)=1/2\quad\text{and }\quad u(r)/r^{1/2}\to 0\;\text{ as }r\to\infty,
\]
and also
\begin{itemize}
\item[(a)]
there exists $r_0>0$ such that
\begin{equation}\label{delta-est}
u(r) \le r^{1/2-\delta(r)}, \qfor r\ge r_0;
\end{equation}
\item[(b)]
we have $u(-r)=0$, for $r\in [a_n,b_n]$, $n\ge 0$, and, for~$n$ sufficiently large,
\begin{equation}\label{u(-r)}
u(-r) <  \log b_{n+1}, \qfor b_n\le r \le a_{n+1}.
\end{equation}
\end{itemize}
\end{lemma}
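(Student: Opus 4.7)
My plan is to handle the routine assertions first and then focus on the main estimate. Existence and uniqueness of the normalized function~$u$ with $u(1)=1$ come from Benedicks' theorem (Remark after Definition~3.2). For $\rho(u)=1/2$, Theorem~\ref{basic-props}(a) provides the upper bound $\rho(u)\le 1/2$; for the lower bound, rearrange the recursion~\eqref{est-u3} as $\log a_n/\log b_n = \log b_{n-1}/b_{n-1}\to 0$, so that the logarithmic measure of $E^*\cap[1,b_n]$ is at least $\log(b_n/a_n) = \log b_n\,(1+o(1))$. Hence $\overline{\Lambda}(E^*)=1$, and then $\rho(u)\ge 1/2$ by Theorem~\ref{basic-props}(b). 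The minimal-type statement $u(r)/r^{1/2}\to 0$ is immediate from Theorem~\ref{min-type} applied to the gaps $(-a_{n+1},-b_n)$ in $E$, since \eqref{est-u1} gives $a_{n+1}/b_n\ge b_n^9\to\infty$.

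For parts (a) and (b), the identity $u(-r)=0$ on $[a_n,b_n]$ is by the definition of $\K$. Everything else will be reduced to the following key claim, call it $(\star)$: \emph{for all sufficiently large $n$, $u(a_{n+1})<\log b_{n+1}$.} Given $(\star)$, part (b) is immediate: Theorem~\ref{basic-props}(a) gives $u(-r)\le u(r)=B(r,u)$, and $B(r,u)$ is increasing in $r$ by subharmonicity, so for $r\in[b_n,a_{n+1}]$ one has $u(-r)\le u(a_{n+1})<\log b_{n+1}$. For part (a), the decreasing property $u(r)/r^{1/2}\le u(a_{n+1})/a_{n+1}^{1/2}$ from Theorem~\ref{basic-props}(a), combined with $(\star)$, gives $u(r)\le r^{1/2}\log b_{n+1}/a_{n+1}^{1/2}$ for $r\ge a_{n+1}$. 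The bound $a_{n+1}\ge b_n^{10}$ from~\eqref{est-u1} makes the prefactor $\log b_{n+1}/a_{n+1}^{1/2}$ tiny, while hypothesis~\eqref{est-u2} controls $\delta(r)$ from above across $r\in[a_{n+1},a_{n+2}]$; a direct calculation then yields $u(r)\le r^{1/2-\delta(r)}$, and iterating in $n$ gives~\eqref{delta-est}.

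The main obstacle is the proof of $(\star)$, which cannot be obtained from Theorem~\ref{basic-props}(a) alone: the decreasing property yields only $u(a_{n+1})\le (a_{n+1}/a_n)^{1/2}u(a_n)$, and the factor $(a_{n+1}/a_n)^{1/2}$ swamps any logarithmic bound inherited from $u(a_n)$. The plan is instead to combine Beurling's Lemma~\ref{Beur}, which applied across the slit $[-b_{n+1},-a_{n+1}]$ gives the sharp lower bound
\[
u(b_{n+1}) \ge \tfrac12 (b_{n+1}/a_{n+1})^{1/2}\, u(a_{n+1}),
\]
with a tight upper bound on $u(b_{n+1})$ derived from the global structure of~$u$---most likely through a harmonic-measure estimate in the slit complement $\C\setminus E$, or equivalently via an explicit Riesz-potential representation, exploiting the fact that the cumulative log-measure of $E^*\cap[1,b_{n+1}]$ is nearly $\log b_{n+1}$ to force $u(b_{n+1})$ to lie a factor $b_{n+1}^{\log b_n/(2b_n)}$ below the envelope $b_{n+1}^{1/2}$. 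The precise form of the recursion~\eqref{est-u3} is calibrated exactly so that this decay rate, when fed back through Beurling's inequality, produces the scale $\log b_{n+1}$ for $u(a_{n+1})$; the coupling between $a_n$ and $b_n$ in the hypotheses is dictated by this balance. Making the upper bound on $u(b_{n+1})$ rigorous is the delicate technical step.
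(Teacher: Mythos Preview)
Your treatment of the preliminary assertions (existence, $\rho(u)=1/2$, minimal type, $u(-r)=0$ on the slits) is fine and matches the paper. The difficulty is entirely in your claim $(\star)$ and in how you use it.

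First, $(\star)$ is not proved, and the Beurling route you sketch is not viable. Rearranging Beurling's inequality gives $u(a_{n+1})\le 2(a_{n+1}/b_{n+1})^{1/2}u(b_{n+1})$, so you would need an upper bound of the form $u(b_{n+1})\lesssim (b_{n+1}/a_{n+1})^{1/2}\log b_{n+1}$; but no tool you cite delivers this, and producing such a bound is essentially as hard as proving $(\star)$ itself. The paper avoids this circularity entirely by working with the spherical mean $I(r)=\tfrac{1}{2\pi}\int_0^{2\pi}u(re^{i\theta})\,d\theta$. Because the Riesz measure of $u$ is supported on $E$, one has $I(r)=A_n\log r+B_n$ on each gap $[b_n,a_{n+1}]$. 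The single key estimate is $A_n<b_n/\log b_n$, obtained by applying the decreasing property of $u(r)/r^{1/2}$ at $r=b_n$ and $r=b_n^2$. Combined with the elementary facts $u(-r)<I(r)<u(r)$ and $u(r)\le 3I(2r)$ (Poisson majorisation), and with $B_n<0$ for large $n$ (convexity of $I(e^t)$), this gives both parts directly. In particular, part~(b) comes from $u(-r)<I(r)\le A_n\log a_{n+1}<\log b_{n+1}$, so the sharp constant~$1$ in \eqref{u(-r)} falls out immediately; your route via $u(-r)\le u(r)\le u(a_{n+1})$ passes through $u(r)\le 3I(2r)$ and would only yield $u(a_{n+1})<C\log b_{n+1}$ with $C>1$.

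Second, even granting $(\star)$, your derivation of part~(a) has a gap. The single-anchor bound $u(r)/r^{1/2}\le \log b_{n+1}/a_{n+1}^{1/2}$, combined with \eqref{est-u2}, yields $\eps(r)\ge\delta(r)$ only for $\log r\lesssim \log b_{n+1}$, whereas you must reach $r=a_{n+2}$; and \eqref{est-u2} allows $a_{n+2}$ to be arbitrarily large compared with any fixed power of $b_{n+1}$ when $\delta$ decays slowly. The paper's proof instead splits into two regimes: on the gap $[b_n^5,\tfrac12 a_{n+1}]$ it uses $u(r)\le 3I(2r)\le 3A_n\log(2r)$ to get $\eps(r)>1/4\ge\delta(r)$, and only on the short slit range $[\tfrac12 a_n,b_n^5]$ does it use the decreasing property anchored at $\tfrac12 a_n$, where the ratio $\log r/\log a_n$ is bounded and \eqref{est-u2},\eqref{est-u3} close the estimate.
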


\begin{proof}
Let~$u\in\K$ be the unique subharmonic function with $u(1)=1$ corresponding to the set $E=\bigcup_{n\ge 0}[-b_n,-a_n]$, where $a_n$ and $b_n$ satisfy \eqref{est-u1}, \eqref{est-u2} and \eqref{est-u3}. The property \eqref{est-u3} implies that $\overline{\Lambda}(E^*)=1$, so~$u$ has order~1/2 by Theorem~\ref{basic-props}, part~(b).

Also, by \eqref{est-u1} and Theorem~\ref{min-type}, we have
\[
\frac{u(r)}{r^{1/2}}\to 0 \;\text{ as } r\to \infty.
\]
In the proof, we often use the following property of~$u$, from Theorem~\ref{basic-props}, part~(a):
\begin{equation}\label{uprop}
\frac{u(r)}{r^{1/2}} = \frac{B(r,u)}{r^{1/2}} \;\;\text{is decreasing as } r\to\infty.
\end{equation}
We also need further properties of $u(r)$ and of the average
\[
I(r)=\frac{1}{2\pi}\int_0^{2\pi} u(re^{i\theta})\,d\theta,\quad r>0,
\]
namely that
\begin{equation}\label{uIprop}
u(r)\text{ and } I(r) \text{ are positive increasing convex functions of } \log r,
\end{equation}
and
\begin{equation}\label{Iprop}
I(r)= A_n\log r +B_n, \qfor r\in [b_n,a_{n+1}], \;n\ge 0,
\end{equation}
for some constants $A_n>0$ and $B_n$, $n\ge 0$; see \cite[Section~2.7]{HK76}. Also,
\begin{equation}\label{uandI}
u(-r) <I(r)< u(r), \qfor r>0,
\end{equation}
by Theorem~\ref{basic-props}, part~(a), and
\begin{equation}\label{uIclose}
u(r) \le 3I(2r),\qfor r\ge 0,
\end{equation}
by using the Poisson integral of $u$ to majorise~$u$ in $\{z:|z|\le 2r\}$; see \cite[Theorem~2.5]{HK76} for example.

Taken together, these properties will give us good control over the behaviour of $u(r)$ and $u(-r)$ in terms of the sequences $(a_n)$ and $(b_n)$.

We write
\[
u(r)= r^{1/2-\eps(r)},\;\; r>0,
\]
where $0\le \eps(r)<1/2$, for $r>1$, by \eqref{uprop}. In order to verify \eqref{delta-est}, we need to show that $\eps(r)\ge \delta(r)$ for~$r$ sufficiently large.

To do this, we obtain bounds on the coefficients~$A_n$ and~$B_n$, $n\ge 0$, in~\eqref{Iprop}. First, we obtain an upper bound for~$A_n$. By \eqref{uprop}, \eqref{Iprop} and \eqref{uandI},
\[
\frac{A_n\log (b_n^2)+B_n}{b_n}=\frac{I(b_n^2)}{b_n} < \frac{u(b_n^2)}{b_n}\le \frac{u(b_n)}{b_n^{1/2}},\qfor n\ge 0,
\]
so
\begin{equation}\label{Anlogbn}
A_n\log b_n+(A_n\log b_n+B_n)< u(b_n)b_n^{1/2}\le b_n,\qfor n\ge 0,
\end{equation}
by \eqref{uprop} again. Since $A_n\log b_n+B_n=I(b_n)> 0$, we deduce from \eqref{Anlogbn} that
\begin{equation}\label{Bnbound}
A_n < \frac{b_n}{\log b_n},\qfor n\ge 0.
\end{equation}

Next we claim that $B_n<0$ for~$n$ sufficiently large. We do this by using the fact that $\phi(t)=I(e^t)$ is a positive increasing convex function with the property that $\phi(t)/t\to \infty$ as $t\to\infty$. This last property holds because~$\phi$ is convex and
\[
\limsup_{t\to\infty} \frac{\log \phi(t)}{t}=\limsup_{r\to\infty} \frac{\log I(r)}{\log r}=\limsup_{r\to\infty} \frac{\log u(r)}{\log r}=1/2,
\]
in view of \eqref{uIclose} and the fact that~$u$ has order $1/2$.

Now, by \eqref{Iprop},
\[
\frac{\phi(t)}{t}=A_n+\frac{B_n}{t},\;\text{ for } \log b_n \le t\le \log a_{n+1},
\]
and, by the convexity of $\phi$, we have
\[
A_n\ge \frac{\phi(\log b_n)-\phi(1)}{\log b_n - 1} > \frac{\phi(\log b_n)}{\log b_n}= A_n+\frac{B_n}{\log b_n},
\]
for~$n$ sufficiently large. Hence there exists a positive integer $N_1$ such that
\begin{equation}\label{Bn-neg}
B_n<0, \;\text{ for } n\ge N_1,
\end{equation}
as claimed.

The estimate \eqref{Bnbound} implies that there exists $N_2\ge N_1$ such that
\[
A_n < \frac{b_n}{\log b_n}\le \frac{r^{1/5}}{\log r^{1/5}}< \frac{r^{1/4}}{3\log 2r}\,,\qfor  r\ge b_n^5, n\ge N_2.
\]
Therefore, we deduce from \eqref{uIclose}, \eqref{Bn-neg} and \eqref{Iprop} that
\[
\frac{1}{r^{\eps(r)}}=\frac{u(r)}{r^{1/2}}\le \frac{3I(2r)}{r^{1/2}}< \frac{3A_n\log 2r}{r^{1/2}} < \frac{1}{r^{1/4}}, \qfor b_n^5\le r\le \tfrac12a_{n+1}, n\ge N_2.
\]
Hence
\begin{equation}\label{eps-large}
\eps(r) > \frac14 \ge \delta(r),\qfor b_n^5\le r \le \tfrac12a_{n+1}, n\ge N_2.
\end{equation}

To complete the proof of \eqref{delta-est}, we show that $\eps(r)\ge \delta(r)$ for $r\in [\frac12a_n,b_n^5]$ and $n\ge N_2+1$ by using the fact that
\[
\frac{u(r)}{r^{1/2}}\le \frac{u(\tfrac12a_n)}{(\tfrac12a_n)^{1/2}}, \qfor r\ge \tfrac12a_n, n\ge 0,
\]
by \eqref{uprop} once again, that is,
\[
\eps(r)\ge \frac{\eps(\tfrac12a_n)\log \tfrac12a_n}{\log r}, \qfor r \ge \tfrac12a_n, n\ge 0.
\]

Therefore, for $r\in [\frac12a_n,b_n^5]$ and $n\ge N_2+1$, we have, by \eqref{eps-large}, \eqref{est-u3} and \eqref{est-u2},
\begin{align}\label{eps-small}
\eps(r)&\ge \frac{\eps(\tfrac12a_n)\log \tfrac12a_n}{\log r}>\frac{\log \tfrac12a_n}{4\log r}\\
& \ge \frac{\log a_n^{1/2}}{20\log b_n}=\frac{\log b_{n-1}}{40b_{n-1}}\notag\\
&\ge \delta(\tfrac12a_n) \ge \delta(r),\notag
\end{align}
since $\delta$ is a decreasing function. Combining \eqref{eps-large} and \eqref{eps-small}, we obtain \eqref{delta-est}.

Finally, \eqref{u(-r)} follows immediately from the fact that, for $r\in [a_n,b_n]$, we have $u(-r)=0$ and for $r\in [b_n,a_{n+1}]$, $n \ge N_1$,  we have
\begin{align*}
u(-r)<I(r)&=A_n\log r+B_n\\
&\le A_n\log a_{n+1}\\
&\le \frac{b_n}{\log b_n} \log a_{n+1}\\
&=\log b_{n+1},
\end{align*}
by \eqref{uandI}, \eqref{Bn-neg}, \eqref{Bnbound} and \eqref{est-u3}.
\end{proof}

We now give the proof of Example~\ref{main5}. Here we use again the fact, mentioned in Section~\ref{orderhalf-positive}, that property~\eqref{minmodprop} holds if and only if
\begin{equation}\label{mtequiv1}
\text{there exists } R > 0 \text{ such that } \mt(r) >r, \text{ for } r \geq R,
\end{equation}
where
\[
\mt(r) := \max\{ m(s):0 \leq s \leq r\}, \qfor r \in [0, \infty).
\]
\begin{proof}[Proof of Example~\ref{main5}]
Without loss of generality we can assume that $\delta(r): (0,\infty) \to (0, 1/4)$ is a decreasing function such that $\delta(r)\to 0$ as $r \to \infty$.

Let~$u$ be the subharmonic function constructed in Lemma~\ref{example-u} and let $D=\C\setminus E=\C\setminus \bigcup_{n\ge 0}[-b_n,-a_n]$. We apply Theorem~\ref{discretise} to the function~$u$ to obtain an entire function~$f_1$ with zeros in the set~$E$ such that
\begin{equation}\label{fu1}
\log|f_1(z)|-u(z) = O(\log|z|)\;\text{ as }z\to\infty,\;z\in D_1,
\end{equation}
where $D_1=D\setminus \{z:{\rm dist}(z,\partial D)\le 1\}$, and also such that
\begin{equation}\label{fu2}
\log|f_1(z)| \le u(z)+ 4\log|z|, \qfor|z|\ge R,
\end{equation}
for some $R=R(u)>0$.

In view of the symmetry of~$u$ in the real axis, we can assume that $\arg f_1(x)=0$ for $x>0$.

We deduce from \eqref{fu1} and \eqref{fu2} that~$f_1$ has order $1/2$ minimal type, since~$u$ has these properties. In particular we can represent~$f_1$ as an infinite product of the form
\begin{equation}\label{prod}
f_1(z)=c\prod_{n=0}^{\infty} \left(1+\frac{z}{t_n}\right),\;\;\text{where } c>0 \text{ and } t_n\in E,\; n\ge 0.
\end{equation}
We also deduce from \eqref{fu2} that there exist $r_0>0$ such that
\begin{equation}\label{min1}
m(r,f_1)\le \exp (u(-r)+4\log r),\qfor r\ge r_0,
\end{equation}
and hence, since $u$ vanishes on the set $E$,
\begin{equation}\label{min2}
m(r,f_1)\le r^4,\qfor r\ge r_0, \; -r\in E.
\end{equation}

Now we consider the function~$f$ defined by
\begin{equation}\label{prodf}
f(z)=c\prod_{n=5}^{\infty} \left(1+\frac{z}{t_n}\right),
\end{equation}
and prove that~$f$ satisfies \eqref{max}. For some constant $K>0$, we have, by \eqref{fu2} and \eqref{delta-est},
\begin{align}
M(r,f)&\le M(r,f_1)\frac{K}{r^5}\notag\\
&\le \exp(u(r)+4\log r)\frac{K}{r^{5}}\notag\\
& \le \exp (u(r))\notag\\
&\le \exp\left(r^{1/2-\delta(r)}\right),\notag
\end{align}
provided that~$r$ is sufficiently large.

Finally, we show that property \eqref{mtequiv} does not hold for this function~$f$. For $r\in [a_{n+1}, b_{n+1}]$, we have
\[
m(r,f)\le m(r,f_1)\frac{K}{r^{5}}<1\le b_{n+1},
\]
provided that~$n$ is sufficiently large, by \eqref{prodf} and \eqref{min2}. Next, for $r\in [b_n,a_{n+1}]$, we have, by \eqref{fu2} and \eqref{u(-r)},
\begin{align}
m(r,f)&\le m(r,f_1)\frac{K}{r^{5}}\notag\\
&\le \exp\left(u(-r)+4\log r\right)\frac{K}{r^{5}}\notag\\
& \le \exp (u(-r))\le b_{n+1},\notag
\end{align}
provided that~$n$ is sufficiently large.

Therefore, for~$n$ sufficiently large,
\[
\mt(b_{n+1},f) = \max\{m(r):0\le r\le b_{n+1}\} \le b_{n+1},
\]
so property \eqref{mtequiv1} does not hold. Hence, for the function~$f$, there is no value of~$r$ such that $m^n(r)\to \infty$ as $n\to \infty$.

This completes the proof of Example~\ref{main5}.
\end{proof}

\begin{proof}[Proof of Example~\ref{main4}]
Here we follow a similar procedure to the previous proof, starting this time with a subharmonic function~$u\in \K$, where $E=\bigcup_{n\ge 0}[-b_n,-a_n]$, such that $u(1)=1$, $b_n/a_n\nearrow \infty$ as $n\to\infty$ and $a_{n+1}=2b_n$ for $n\ge 0$.

By Theorems~\ref{basic-props} and \ref{min-type}, the function~$u$ has order 1/2 minimal type and lower order 1/2, so the entire functions~$f_1$ and~$f$ resulting from applying Theorem~\ref{discretise} (as in the previous proof) also have these properties, by~\eqref{fu1} and~\eqref{fu2}.

Finally, to show that property~\eqref{mtequiv1} fails, we need to choose the sequences $(a_n)$ and $(b_n)$ iteratively so that
\begin{equation}\label{u(-r)again}
u(-r) <  \log b_{n+1}, \qfor b_n\le r \le a_{n+1}, n\ge 0,
\end{equation}
also holds, in order that
\[
m(r,f) \le \exp (u(-r))\le b_{n+1}, \qfor b_n\le r \le a_{n+1}, n\ge 0.
\]
Arranging for \eqref{u(-r)again} to hold is clearly possible since
\[
\max\{u(-r):b_n\le r\le a_{n+1}\} \le \max\{u(r):b_n\le r\le a_{n+1}\} \le a^{1/2}_{n+1},
\]
by Theorem~\ref{basic-props}, part~(a).
\end{proof}
{\it Acknowledgement}\; Phil Rippon expresses his deep gratitude to Walter Hayman, his PhD adviser, for Walter's inspiring research and teaching over many years. In particular, several aspects of the work in this paper can be traced to an early supervision at Imperial College when Walter passed on a copy of Bo Kjellberg's thesis \cite{bK48}, which contains the origins of the ideas that we use for constructing our examples and also the key Lemma~\ref{Beur}, due to Beurling, that we use to prove our positive theorems.

\end{document}